\providecommand{\tabularnewline}{\\}
\newtheoremstyle{thm}
{9pt}
{9pt}
{\itshape}
{}
{\bfseries}
{.}
{ }
{}
\theoremstyle{thm}
\newtheorem{theorem}{Theorem}[section]
\newtheoremstyle{def}
{9pt}
{9pt}
{}
{}
{\bfseries}
{.}
{ }
{}
\theoremstyle{def}
\newcommand{\R}{\mathbb{R}} 
\newcommand{\E}{\mathbb{E}} 
\newcommand{\PP}{\mathbb{P}} 
    \def\cd{\stackrel{\mathcal{D}}{\longrightarrow}}
    \def\cp{\stackrel{\mathcal{\PP}}{\longrightarrow}}
\renewcommand{\footnoterule}{%
	\kern -3.5pt
	\hrule width \textwidth height 1pt
	\kern 3.5pt
}
\renewcommand{\footnoterule}{%
	\kern -3.5pt
	\hrule width \textwidth height 1pt
	\kern 3.5pt
}
\def\blfootnote{\xdef\@thefnmark{}\@footnotetext}
\title{Logistic or not logistic?}
\author{James S. Allison\\
School of Mathematical and Statistical Sciences,\\ North-West University,\\ South Africa. \\
\href{mailto:James.Allison@nwu.ac.za}{James.Allison@nwu.ac.za}\\
\And
Bruno Ebner\\
 Institute of Stochastics, \\
Karlsruhe Institute of Technology (KIT), \\
Englerstr. 2, 76133 Karlsruhe, \\
Germany\\
\href{mailto:Bruno.Ebner@kit.edu}{Bruno.Ebner@kit.edu}\\
\And
Marius Smuts\\
School of Mathematical and Statistical Sciences,\\ North-West University,\\ South Africa. \\
\href{mailto:Smuts.Marius@nwu.ac.za}{Smuts.Marius@nwu.ac.za}\\
}
\date{\today}
\begin{document}

\maketitle

\blfootnote{ {\em MSC 2010 subject
classifications.} Primary 62G10 Secondary 62E10}
\blfootnote{
{\em Key words and phrases} Goodness-of-fit; Logistic distribution; empirical characteristic function; density approach; Stein's method}

\begin{abstract}
We propose a new class of goodness-of-fit tests for the logistic distribution based on a characterisation related to the density approach in the context of Stein's method. This characterisation based test is a first of its kind for the logistic distribution. The asymptotic null distribution of the test statistic is derived and it is shown that the test is consistent against fixed alternatives. The finite sample power performance of the newly proposed
class of tests is compared to various existing tests by means of a Monte Carlo study. It is found that this new class of tests are especially powerful when the alternative distributions
are heavy tailed, like Student's t and Cauchy, or for skew alternatives such as the log-normal, gamma and chi-square distributions.  \end{abstract}

\section{Introduction}\label{sec:Intro}
 The logistic distribution apparently found its origin in the mid-nineteenth
century in the writings of \cite{verhulst1838notice,verhulst1845recherches}.
Since then it has been used in many different areas such as logistic
regression, logit models and neural networks. The logistic law
has become a popular choice of model in reliability theory and
survival analysis (see e.g., \citealp{kannisto1999trends}) and
lately in finance (\citealp{ahmad}). The United States Chess Federation
and FIDE have recently changed its formula for calculating chess ratings
of players by using the more heavy tailed logistic distribution instead
of the lighter tailed normal distribution (\citealp{A:2017,E:1978}). For a detail
account on the history and application of the logistic distribution,
the interested reader is referred to \citet{JKB:1995}.

In the literature some goodness-of-fit tests for assessing whether
the observed data are realisations from the logistic distribution
have been developed and studied. These include tests based on the
empirical distribution function (\citealp{stephens1979tests}), normalised
spacings (\citealp{lockhart1986tests}), chi-squared type statistics
(\citealp{aguirre1994chi}), orthogonal expansions (\citealp{cuadras2000some}),
empirical characteristic and moment generating functions (\citealp{meintanis2004goodness,E:2005})
and the Gini index (\citealp{alizadeh2017gini}).

\citet{balakrishnan1991handbook} provides an excellent discussion
on the logistic distribution including some of the goodness-of-fit
tests mentioned above. \citet{nikitin2019goodness} recently proposed
a test based on a characterisation of the logistic distribution involving
independent shifts. In this paper the authors remarked that ``no
goodness-of-fit tests of the composite hypothesis to the logistic
family based on characterizations are yet known''. Although, as mentioned,
some tests exists for the logistic distribution, they are few in number
compared to those for other distributions such as the normal, exponential
and the Rayleigh distribution. In this paper we propose a new class
of tests for the logistic distribution based on a new characterisation filling the gap reported by \citet{nikitin2019goodness}. To be precise, we write shorthand L$(\mu,\sigma)$, $\mu \in\R$, $\sigma > 0$, for the logistic distribution with location parameter $\mu$ and scale parameter $\sigma$ if the density is defined by
\begin{equation}\label{eq:density}
f(x,\mu,\sigma) = \frac1\sigma\frac{\exp\left(-\frac{x-\mu}{\sigma}\right)}{\left(1+\exp\left(-\frac{x-\mu}{\sigma}\right)\right)^2} = \frac1{4\sigma}\left(\mbox{sech}\left(-\frac{x-\mu}{2\sigma}\right)\right)^2, x\in\R,
\end{equation}
where $\mbox{sech}(\cdot)=(\cosh(\cdot))^{-1}$ is the hyperbolic secant. Note that $X \sim$ L$(\mu, \sigma)$ if, and only if, $\frac{X-\mu}{\sigma} \sim$ L$(0,1)$ and hence the logistic distribution belongs to the location-scale family of distributions, for a detailed discussion see \cite{JKB:1995}, chapter 23. In the following we denote the family of logistic distributions by $\mathcal{L}:=\{\mbox{L}(\mu,\sigma):\,\mu\in\R,\sigma>0\}$, a family of distributions which is closed under translation and rescaling. Let $X, X_1, X_2, \dotso$ be real-valued independent and identically distributed (iid.) random variables with distribution $\mathbb{P}^X$ defined on an underlying probability space $(\Omega,\mathcal{A},\mathbb{P})$. We test the composite hypothesis
\begin{equation}\label{eq:H0}
H_0:\;\mathbb{P}^X\in\mathcal{L}
\end{equation}
against general alternatives based on the sample $X_1,\ldots,X_n$.

The novel procedure is based on the following new characterisation of the standard Logistic distribution, which is related to the density method in the broad theory of Stein's method for distributional approximation, see for example \citet{CGS:2011,LS:2013}, and the Stein-Tikhomirov approach, see \citet{AMPS:2017}.

\begin{theorem}\label{thm:chr}
Let $X$ be a random variable with absolutely continuous density $p$ and $\E\left[ \left|\frac{1-\exp(-X)}{1+\exp(-X)}\right|\right] < \infty$. Then $X$ follows a standard logistic distribution $\mbox{\rm L}(0,1)$ if, and only if
\begin{equation}\label{eq:meanchar}
\E \left[f_t'(X)-\frac{1-\exp(-X)}{1+\exp(-X)}f_t(X)\right]= 0
\end{equation}
holds for all $t\in \R$, where $f_t(x)=\exp(itx)$ and $i$ is the imaginary unit.
\end{theorem}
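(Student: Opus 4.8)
The plan is to treat the two implications separately, after observing that the bracketed expression in \eqref{eq:meanchar} is precisely the density Stein operator of the standard logistic law applied to $f_t$. Writing $g(x)=\frac{1-\exp(-x)}{1+\exp(-x)}=\tanh(x/2)$, a short computation shows that the standard logistic density $p_0(x)=\tfrac14\,\mathrm{sech}^2(x/2)$ satisfies the first-order linear ordinary differential equation $p_0'(x)=-g(x)p_0(x)$; equivalently, $p_0'/p_0$ is the logistic score function. This identity is the structural fact driving both directions.

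For the necessity (``only if'') part I would assume $X\sim\mathrm{L}(0,1)$ and integrate by parts. Since $f_t(x)=\exp(itx)$ satisfies $|f_t|\equiv 1$ while $p_0$ and $p_0'$ decay exponentially at $\pm\infty$, the boundary terms vanish and
\[
\E[f_t'(X)]=\int f_t'(x)p_0(x)\,dx=-\int f_t(x)p_0'(x)\,dx=\int f_t(x)g(x)p_0(x)\,dx=\E[g(X)f_t(X)],
\]
which is exactly \eqref{eq:meanchar}. The stated moment assumption (in fact automatic here, since $|g|\le 1$) guarantees that all integrals are finite.

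For the sufficiency (``if'') part, which is the substantive direction, I would pass to Fourier language. Let $\varphi(t)=\E[e^{itX}]$ and $\psi(t)=\E[g(X)e^{itX}]$; both are finite for every real $t$ because $|g|\le 1$ and $p\in L^1$. As $f_t'(x)=it\,e^{itx}$, the assumption \eqref{eq:meanchar} reads $it\,\varphi(t)=\psi(t)$ for all $t\in\R$. Now $\psi=\widehat{gp}$ is the Fourier transform of the integrable function $gp$, while $it\,\varphi(t)$ is the Fourier transform of the distributional derivative $-p'$ (as tempered distributions, $\widehat{p'}(t)=-it\,\hat p(t)$). By injectivity of the Fourier transform on $\mathcal{S}'(\R)$, these transforms can agree only if $-p'=gp$; in particular the distributional derivative of $p$ is the $L^1$ function $-gp$, so that, using the assumed absolute continuity of $p$, the relation $p'(x)=-g(x)p(x)$ holds for almost every $x$.

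It then remains to solve this ODE and fix the constant. Multiplying by the integrating factor $\cosh^2(x/2)$ and using $\frac{d}{dx}\cosh^2(x/2)=\cosh(x/2)\sinh(x/2)=g(x)\cosh^2(x/2)$ gives $\frac{d}{dx}\bigl(\cosh^2(x/2)\,p(x)\bigr)=\cosh^2(x/2)\bigl(p'(x)+g(x)p(x)\bigr)=0$ almost everywhere. Since $\cosh^2(x/2)p(x)$ is locally absolutely continuous, it must be constant, whence $p(x)=C\,\mathrm{sech}^2(x/2)$; the normalisation $\int p=1$ together with $\int_{\R}\mathrm{sech}^2(x/2)\,dx=4$ forces $C=\tfrac14$ and identifies $p$ as the standard logistic density. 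I expect the only delicate point to be the Fourier-injectivity step that converts the single-parameter family of scalar identities \eqref{eq:meanchar} into the pointwise differential equation; once the ODE is in hand, the integrating-factor computation and the normalisation are routine.
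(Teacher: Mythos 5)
Your proposal is correct and follows essentially the same route as the paper: the forward direction by direct computation (integration by parts against $p_0'=-\tanh(x/2)p_0$), and the converse by recognising $it\,\varphi(t)$ as the Fourier transform of $-p'$, invoking injectivity of the Fourier transform to obtain the ODE $p'+\tanh(x/2)\,p=0$, and then solving and normalising. Your write-up is somewhat more explicit than the paper's about the distributional-derivative and injectivity step, but the argument is the same.
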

\begin{proof}
For $X\sim\mbox{\rm L}(0,1)$ direct calculation shows the assertion. Let $X$ be a random variable with absolutely continuous density function $p$ such that
\begin{equation*}
\E \left[\left(it-\frac{1-\exp(-X)}{1+\exp(-X)}\right)\exp(itX)\right]= 0
\end{equation*}
holds for all $t\in \R$. Note that since $-it\E[\exp(itX)]$ is the Fourier-Stieltjes transform of the derivative of $p$ we have
\begin{equation*}
0=\E \left[\left(it-\frac{1-\exp(-X)}{1+\exp(-X)}\right)\exp(itX)\right]=\int_{-\infty}^\infty \left(-p'(x)-\frac{1-\exp(-X)}{1+\exp(-X)}p(x)\right)\exp(itx)\,\mbox{d}x
\end{equation*}
for all $t\in\R$. By standard properties of the Fourier-Stieltjes transform, we hence note that $p$ must satisfy the ordinary differential equation
\begin{equation*}
p'(x)+\frac{1-\exp(-x)}{1+\exp(-x)}p(x)=0
\end{equation*}
for all $x\in\R$. By separation of variables it is straightforward to see, that the only solution satisfying $\int_{-\infty}^\infty p(x)\mbox{d}x=1$ is $p(x)=f(x,0,1)$, $x\in\R$, and $X\sim\mbox{\rm L}(0,1)$ follows.
\end{proof}

To model the standardisation assumption, we consider the so called scaled residuals $Y_{n,1},...,Y_{n,n}$, given by
\begin{equation*}
Y_{n,j}=\frac{X_j-\widehat{\mu}_n}{\widehat{\sigma}_n}, \quad j=1,\ldots,n.
\end{equation*}
Here, $\widehat{\mu}_n=\widehat{\mu}_n(X_1,...,X_n)$ and $\widehat{\sigma}_n=\widehat{\sigma}_n(X_1,...,X_n)$ denote consistent estimators of $\mu\in\R$ and $\sigma>0$ such that
\begin{eqnarray}
\label{eq:est.mu}
\widehat{\mu}_n(bX_1+c,...,bX_n+c)&=&b\widehat{\mu}_n(X_1,...,X_n)+c,\\\label{eq:est.sigma}
\widehat{\sigma}_n(bX_1+c,...,bX_n+c)&=&b\widehat{\sigma}_n(X_1,...,X_n),
\end{eqnarray}
holds for each $b>0$ and $c\in\R$. By (\ref{eq:est.mu}) and (\ref{eq:est.sigma}) it is easy to see that $Y_{n,j}$, $j=1,\ldots,n,$ do not depend on the location nor the scale parameter, so we assume $\mu=0$ and $\sigma=1$ in the following. The test statistic
\begin{equation*}
\label{eq:statistic}
T_n=n\int_{-\infty}^\infty \left|\frac1n\sum_{j=1}^n\left(it-\frac{1-\exp(-Y_{n,j})}{1+\exp(-Y_{n,j})}\right)\exp(itY_{n,j})\right|^2\omega(t)\,\mbox{d}t
\end{equation*}
is the weighted $L^2$-distance from (\ref{eq:meanchar}) to the 0-function. Here, $\omega(\cdot)$ denotes a symmetric, positive weight function satisfying $\int_{-\infty}^\infty\omega(t)\,\mbox{d}t<\infty$, that guaranties that the considered integrals are finite. Since under the hypothesis \eqref{eq:H0} $T_n$ should be close to 0, we reject $H_0$ for \textit{large} values of $T_n$.

Note that $T_n$ is in the structural spirit of Section 5.4.2 in \citet{anastasiou2021steins}. It only depends on the scaled residuals $Y_{n,j}$, $j=1,\ldots,n$, and as a consequence it is invariant due to affine transformations of the data, i.e. w.r.t. transformations of the form $x\mapsto b x+c$, $b>0,c\in\R$. This is indeed a desirable property, since the family $\mathcal{L}$ is closed under affine transformations.

Direct calculations show with $\omega(t)=\omega_a(t)=\exp(-at^2)$, $t\in\R$, $a>0$, that the integration-free and numerically stable version is
\begin{eqnarray*}
T_{n,a}&=&\frac1{4a^2n}\sqrt{\frac{\pi}{a}}\sum_{j,k=1}^n\frac{\exp(-(Y_{n,j,k}^+)^2/4a)}{(\exp(Y_{n,j})+1)(\exp(Y_{n,k})+1)}\left[(4a^2+2a+(Y_{n,j,k}^-)^2)\exp(Y_{n,j,k}^++Y_{n,j,k}^\cdot/a)\right.
\\&&\left.-\exp(Y_{n,j,k}^\cdot/a)\left((4a^2+2a(2Y_{n,j,k}^--1)+(Y_{n,j,k}^-)^2)\exp(Y_{n,j})\right.\right.\\&&\left.\left.\hspace{2.5cm}+(4a^2-2a(2Y_{n,j,k}^-+1)+(Y_{n,j,k}^-)^2)\exp(Y_{n,k})\right)\right.
\\&& \left.+(4a^2+2a-(Y_{n,j,k}^-)^2)\exp(Y_{n,j,k}^\cdot/a)\right],
\end{eqnarray*}
with $Y_{n,j,k}^+=Y_{n,j}+Y_{n,k}$, $Y_{n,j,k}^-=Y_{n,j}-Y_{n,k}$, and $Y_{n,j,k}^\cdot=Y_{n,j}\cdot Y_{n,k}$ for $j,k=1,\ldots,n$. Here, $a$ is a so-called \textit{tuning parameter}, which allows some flexibility in the choice of the right test statistic $T_{n,a}$. A good choice of $a$ is suggested in Section \ref{sec:Sim results}.

The rest of the paper is organised as follows. In Section \ref{sec:Limit dist} the asymptotic behaviour of the new test
is investigated under the null and alternative distribution, respectively.
The results of a Monte Carlo study is presented in Section \ref{sec:Sim results}, while all the tests are applied to a real-world data set in Section 4. The
paper concludes in Section \ref{sec:Conclusion} with some concluding remarks and an outlook
for future research.

\section{Limit distribution under the null hypothesis and consistency\label{sec:Limit dist}}
In what follows let $X_1,X_2,\ldots$ be iid. random variables, and in view of affine invariance of $T_{n}$ we assume w.l.o.g. $X_1\sim \mbox{L}(0,1)$. A suitable setup for deriving asymptotic theory is the Hilbert space $\mathbb{H}$ of measurable, square integrable functions $\mathbb{H}=L^2(\mathbb{R},\mathcal{B}, \omega(t)\mbox{d}t)$, where $\mathcal{B}$ is the Borel-$\sigma$-field of $\mathbb{R}$. Notice that the functions figuring within the integral in the definition of $T_{n}$ are $(\mathcal{A} \otimes \mathcal{B}, \mathcal{B})$-measurable random elements of $\mathbb{H}$. We denote by
\begin{equation*}
	\|f\|_{\mathbb{H}} = \left( \int_{-\infty}^\infty \big|f(t)\big|^2 \, \omega(t) \, \mathrm{d}t \right)^{1/2}, \qquad \langle f, g \rangle_{\mathbb{H}}=\int_{-\infty}^\infty f(t)g(t) \, \omega(t) \, \mathrm{d}t
\end{equation*}
the usual norm and inner product in $\mathbb{H}$.  In the following, we assume that the estimators $\widehat{\mu}_n$ and $\widehat{\sigma}_n$ allow linear representations
\begin{align}
    \sqrt{n} \widehat{\mu}_n&= \frac{1}{\sqrt{n}} \sum_{j = 1}^n \psi_1 (X_{j}) + o_{\mathbb{P}}(1),\label{eq:psi_11}\\
    \sqrt{n} (\widehat{\sigma}_n - 1) &= \frac{1}{\sqrt{n}} \sum_{j = 1}^n \psi_2 (X_{j}) + o_{\mathbb{P}}(1),\label{eq:psi_21}
\end{align}
where $o_\mathbb{P}(1)$ denotes a term that converges to 0 in probability, and $\psi_1$ und $\psi_2$ are measurable functions with
\begin{equation*}
    \mathbb{E}[\psi_1 (X_{1})] = \mathbb{E}[\psi_2 (X_{1})] = 0, \quad\mbox{and}\quad \mathbb{E}[\psi_1^2 (X_{1})] < \infty,\;  \mathbb{E}[\psi_2^2 (X_{1})] < \infty. \label{eq:psi2}
\end{equation*}
The interested reader finds formulas for the functions $\psi_1$ and $\psi_2$ in Appendix \ref{app:linrep} for maximum-likelihood and moment estimators.
By the symmetry of the weight function $\omega(\cdot)$ straightforward calculations show
\begin{equation*}
    T_n=\int_{-\infty}^\infty Z_n^2(t)\,\omega(t)\mbox{d}t,
\end{equation*}
where
\begin{equation*}
Z_n(t)=\frac1{\sqrt{n}}\sum_{j=1}^n\kappa(t,Y_{n,j}),\quad t\in\R,
\end{equation*}
and
\begin{equation*}
\kappa(t,x)=(1+\exp(-x))^{-1}\Big[\big((1-t)\cos(xt)-(t+1)\sin(xt)\big)\exp(-x)-(t+1)\cos(xt)-(t-1)\sin(xt)\Big],\quad t,x\in\R.
\end{equation*}
Clearly, $Z_n(t)$ is a sum of dependent random variables. In order to find an asymptotic equivalent stochastic process we use a first order multivariate Taylor expansion and consider with
\begin{eqnarray*}
h(t,x)&=&\left( 1+{\exp(-x)} \right) ^{-2}\Big[\left( \left( t+1 \right)\cos \left( xt \right)   -\left( t-1 \right)\sin
 \left( xt \right)    \right) t \exp(-2x)
\\&&+2\, \left( {t}^{2}+1 \right)  \left( \cos \left( xt
 \right) -\sin \left( xt \right)  \right) \exp(-x)+ \left( \left( t-1 \right)\cos
 \left( xt \right)   -\left( t+1 \right) \sin \left( xt \right)
  \right) t
\Big],\quad t,x\in\R,
\end{eqnarray*}
the helping process
\begin{eqnarray*}
Z_n^*(t)&=&\frac1{\sqrt{n}}\sum_{j=1}^n\kappa(t,X_j)+\widehat{\mu}_nh(t,X_j)+(\widehat{\sigma}_n-1)X_jh(t,X_j),\quad t\in\R.
\end{eqnarray*}
In view of \eqref{eq:psi_11} and \eqref{eq:psi_21} we define the second helping process
\begin{eqnarray*}
Z_n^{**}(t)&=&\frac1{\sqrt{n}}\sum_{j=1}^n\kappa(t,X_j)+\psi_1 (X_{j})\E[h(t,X_1)]+\psi_2 (X_{j})\E[X_1h(t,X_1)],\quad t\in\R,
\end{eqnarray*}
which is a sum of centered iid. random variables. Note that using
\begin{eqnarray*}
\E(\exp(-2X_1)/(1+\exp(-X_1))^2)=1/3, && \E(|X_1|\exp(-X_1)/(1+\exp(-X_1))^2)=\log(2)/3-1/12,\\ \E(\exp(-X_1)/(1+\exp(-X_1))^2)=1/6,  &\mbox{and}& \E(|X_1|\exp(-2X_1)/(1+\exp(-X_1))^2)=2\log(2)/3+1/12,
\end{eqnarray*}
we have by straightforward calculations $\E[|h(t,X_1)|]<\infty$ and $\E[|X_1h(t,X_1)|]<\infty$. In the following, we denote by $\cd$ weak convergence (or alternatively convergence in distribution), whenever random elements (or random variables) are considered, and in the same manner by $\cp$ convergence in probability.
\begin{theorem}
Under the standing assumptions, we have
\begin{equation*}
    Z_n\cd Z,\quad\mbox{as}\;n\rightarrow\infty,
\end{equation*}
in $\mathbb{H}$, where $Z$ is a centred Gaussian process having covariance kernel
\begin{eqnarray*}\label{eq:covker}
    K(s,t)&=&\mathbb{E}[\kappa(s,X_1)\kappa(t,X_1)]+\E[h(s,X_1)]\E[\psi_1(X_1)\kappa(t,X_1)]+\E[h(t,X_1)]\E[\psi_1(X_1)\kappa(s,X_1)]\\\nonumber
    &&+\E[X_1h(s,X_1)]\E[\psi_2(X_1)\kappa(t,X_1)]+\E[X_1h(t,X_1)]\E(\psi_2(X_1)\kappa(s,X_1)]\\\nonumber
    &&+\E[\psi_1^2(X_1)]\E[h(s,X_1)]\E[h(t,X_1)]+\E[\psi_2^2(X_1)]\E[X_1h(s,X_1)]\E[X_1h(t,X_1)]\\\nonumber
    &&+\E[\psi_1(X_1)\psi_2(X_1)]\left(\E[h(s,X_1)]\E[X_1h(t,X_1)]+\E[X_1h(s,X_1)]\E[h(t,X_1)]\right),\quad s,t\in\R.
\end{eqnarray*}
Furthermore, we have $T_n\cd \|Z\|^2_{\mathbb{H}}$, as $n\rightarrow\infty$.
\end{theorem}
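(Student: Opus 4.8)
The plan is to reduce the underlying process $Z_n$ to a sum of centred iid.\ random elements of $\HH$ through the two helping processes $Z_n^*$ and $Z_n^{**}$ introduced above, to invoke a central limit theorem in the separable Hilbert space $\HH$, and finally to transfer the convergence from $Z_n$ to $T_n$ by the continuous mapping theorem. The backbone is the chain of approximations $\|Z_n-Z_n^*\|_\HH=o_{\PP}(1)$ and $\|Z_n^*-Z_n^{**}\|_\HH=o_{\PP}(1)$, which shows that $Z_n$, $Z_n^*$ and $Z_n^{**}$ share the same weak limit, so that it suffices to prove the limit theorem for the genuinely iid.\ process $Z_n^{**}$.

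For the first approximation I would fix $t$, regard $\kappa\big(t,(X_j-\mu)/\sigma\big)$ as a function of $(\mu,\sigma)$ and expand it to first order around $(\mu,\sigma)=(0,1)$. The linear part reproduces exactly the summands of $Z_n^*$, with $h$ playing the role of the relevant first derivative, while the Lagrange remainder is quadratic in $(\widehat{\mu}_n,\widehat{\sigma}_n-1)$. Under $H_0$ the linear representations \eqref{eq:psi_11}--\eqref{eq:psi_21} give $\widehat{\mu}_n=O_{\PP}(n^{-1/2})$ and $\widehat{\sigma}_n-1=O_{\PP}(n^{-1/2})$, so the scalar prefactor of the remainder is $O_{\PP}(n^{-1})$. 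Writing $Z_n-Z_n^*=n^{-1/2}\sum_{j}R_j$ and bounding $\|Z_n-Z_n^*\|_\HH$ by $n^{-1/2}\sum_j\|R_j\|_\HH$ then leaves a factor of order $n^{-1}\cdot\sqrt{n}\cdot \big(n^{-1}\sum_j G_j\big)$, where $G_j$ dominates the $\HH$-norm of the second-order derivatives over a fixed neighbourhood of $(0,1)$. Provided $\E[G_1]<\infty$, the law of large numbers makes $n^{-1}\sum_j G_j=O_{\PP}(1)$ and the whole expression is $O_{\PP}(n^{-1/2})=o_{\PP}(1)$. Establishing this uniform domination of the second derivatives, integrable both against $\omega(t)\,\mathrm{d}t$ and in expectation, is the main obstacle; the moment identities recorded before the theorem are exactly the ingredients that make the relevant expectations finite.

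For the second approximation I would rewrite $Z_n^*(t)=n^{-1/2}\sum_j\kappa(t,X_j)+\big(\sqrt{n}\,\widehat{\mu}_n\big)\,n^{-1}\sum_j h(t,X_j)+\big(\sqrt{n}(\widehat{\sigma}_n-1)\big)\,n^{-1}\sum_j X_jh(t,X_j)$. An $\HH$-valued law of large numbers, justified by the finiteness of $\E|h(t,X_1)|$ and $\E|X_1h(t,X_1)|$ noted above, gives $n^{-1}\sum_j h(\cdot,X_j)\to\E[h(\cdot,X_1)]$ and $n^{-1}\sum_j X_jh(\cdot,X_j)\to\E[X_1h(\cdot,X_1)]$ in $\HH$. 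Combining this with $\sqrt{n}\,\widehat{\mu}_n=n^{-1/2}\sum_j\psi_1(X_j)+o_{\PP}(1)$ and $\sqrt{n}(\widehat{\sigma}_n-1)=n^{-1/2}\sum_j\psi_2(X_j)+o_{\PP}(1)$, together with the tightness $\sqrt{n}\,\widehat{\mu}_n=O_{\PP}(1)$, a Slutsky-type argument in $\HH$ yields $\|Z_n^*-Z_n^{**}\|_\HH=o_{\PP}(1)$.

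Finally, $Z_n^{**}=n^{-1/2}\sum_{j}W_j$ with $W_j(t)=\kappa(t,X_j)+\psi_1(X_j)\E[h(t,X_1)]+\psi_2(X_j)\E[X_1h(t,X_1)]$ a sum of iid.\ random elements of $\HH$. They are centred: $\E[\psi_1(X_1)]=\E[\psi_2(X_1)]=0$ by assumption, and $\E[\kappa(t,X_1)]=0$ for all $t$ is precisely the characterisation of Theorem \ref{thm:chr} specialised to $X_1\sim\mathrm{L}(0,1)$. The moment conditions on $\psi_1,\psi_2$ and the integrability of $\kappa$ and $h$ give $\E\|W_1\|_\HH^2<\infty$, so the central limit theorem for iid.\ random elements in a separable Hilbert space yields $Z_n^{**}\cd Z$, where $Z$ is a centred Gaussian element with covariance operator induced by the kernel $K(s,t)=\E[W_1(s)W_1(t)]$. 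Expanding this product and using $\E[\kappa(t,X_1)]=0$ to discard the terms that are linear in $\kappa$ reproduces the nine summands of the stated kernel $K$. By the approximations above, $Z_n\cd Z$ in $\HH$ as well. Since $f\mapsto\|f\|_\HH^2$ is continuous on $\HH$, the continuous mapping theorem gives $T_n=\|Z_n\|_\HH^2\cd\|Z\|_\HH^2$, completing the proof.
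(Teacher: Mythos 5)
Your proposal follows essentially the same route as the paper: the chain $Z_n\to Z_n^*\to Z_n^{**}$ via a first-order Taylor expansion around $(\mu,\sigma)=(0,1)$ and the Hilbert-space law of large numbers, then the Hilbert-space CLT for the iid.\ process $Z_n^{**}$ with kernel $K(s,t)=\E[W_1(s)W_1(t)]$, and finally the continuous mapping theorem for $T_n$ --- indeed you supply more detail on the remainder bounds than the paper's own (very terse) proof does. One tiny slip: in expanding $\E[W_1(s)W_1(t)]$ nothing is discarded --- all nine summands survive, and $\E[\kappa(t,X_1)]=0$ is needed only to verify that $W_1$ is centred, not to delete the cross terms linear in $\kappa$.
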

\begin{proof}
In a first step, we note that after some algebra using a multivariate Taylor expansion around $(\mu,\sigma)=(0,1)$ we have
\begin{equation*}
    \|Z_n-Z_n^*\|_{\mathbb{H}}\cp 0\quad\mbox{as}\;n\rightarrow\infty.
\end{equation*}
Furthermore using the linear representations in \eqref{eq:psi_11} and \eqref{eq:psi_21} and the law of large numbers in Hilbert spaces, it follows that
\begin{equation*}
    \|Z_n^*-Z_n^{**}\|_{\mathbb{H}}\cp 0\quad\mbox{as}\;n\rightarrow\infty,
\end{equation*}
and by the triangular equation, we see that $Z_n$ has the same limiting distribution as $Z_n^{**}$. Since by the central limit theorem in Hilbert spaces $Z_n^{**}\cd Z$ in $\mathbb{H}$, where $Z$ is the stated Gaussian limit process with covariance kernel $K(s,t)=\mathbb{E}[Z_1^{**}(s)Z_1^{**}(t)]$ which gives the stated formula after a short calculation. The next statement is a direct consequence of the continuous mapping theorem.
\end{proof}

In the rest of this section we assume that the underlying distribution is a fixed alternative to $H_0$ and that the distribution is absolutely continuous, as well as in view of affine invariance of the test statistic, we assume  $\E[X]=0$ and $\E[X^2]<\infty$. Furthermore, we assume that
\begin{equation*}\label{eq:est_conv}
    (\widehat{\mu}_n,\widehat{\sigma}_n)\cp(0,1), \quad\mbox{as}\;n\rightarrow\infty.
\end{equation*}

\begin{theorem}\label{thm:contalt} Under the standing assumptions, we have as $n\rightarrow\infty$,
\begin{equation*}
\frac{T_n}{n}\cp \int_{-\infty}^\infty \left|\E\left[\left(iX-\frac{1-\exp(-X)}{1+\exp(-X)}\right)\exp(itX)\right]\right|^2\omega(t)\mbox{d}t=\Delta.
\end{equation*}
\end{theorem}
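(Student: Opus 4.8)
The plan is to read $T_n/n$ as a squared $\mathbb{H}$-norm and show it converges to the squared norm of the corresponding population element. By the reduction already used for $T_n$, we have $T_n/n=\|\overline\kappa_n\|_{\mathbb{H}}^2$ with $\overline\kappa_n(t)=\frac1n\sum_{j=1}^n\kappa(t,Y_{n,j})$, while the limit is $\Delta=\|\kappa_\infty\|_{\mathbb{H}}^2$ for $\kappa_\infty(t)=\E[\kappa(t,X_1)]$; the identification of $\|\kappa_\infty\|_{\mathbb{H}}^2$ with the integral in the statement follows from the symmetry of $\omega$, exactly as in the reduction of $T_n$ to $\int_{-\infty}^\infty Z_n^2(t)\,\omega(t)\,\mathrm{d}t$. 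Since $\|\cdot\|_{\mathbb{H}}^2$ is continuous, by the continuous mapping theorem it is enough to prove $\|\overline\kappa_n-\kappa_\infty\|_{\mathbb{H}}\cp 0$. First I would interpose the oracle average $\widetilde\kappa_n(t)=\frac1n\sum_{j=1}^n\kappa(t,X_j)$, which uses the raw observations instead of the scaled residuals, and treat $\|\overline\kappa_n-\widetilde\kappa_n\|_{\mathbb{H}}$ and $\|\widetilde\kappa_n-\kappa_\infty\|_{\mathbb{H}}$ separately.

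For the replacement step the key is that $\partial_x\kappa(t,x)=-h(t,x)$, and from the explicit form of $h$ one reads off the uniform-in-$x$ bound $|h(t,x)|\le C(1+t^2)$, since each $x$-dependent factor occurring in $h$ — namely $(1+\exp(-x))^{-2}$, $\exp(-x)(1+\exp(-x))^{-2}$ and $\exp(-2x)(1+\exp(-x))^{-2}$ — is bounded on $\R$. The mean value theorem then gives $|\kappa(t,Y_{n,j})-\kappa(t,X_j)|\le C(1+t^2)|Y_{n,j}-X_j|$, whence $\|\kappa(\cdot,Y_{n,j})-\kappa(\cdot,X_j)\|_{\mathbb{H}}\le C'|Y_{n,j}-X_j|$ with $C'=C\big(\int_{-\infty}^\infty(1+t^2)^2\omega(t)\,\mathrm{d}t\big)^{1/2}<\infty$, the last integral being finite for the Gaussian weight $\omega_a$. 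By the triangle inequality in $\mathbb{H}$, $\|\overline\kappa_n-\widetilde\kappa_n\|_{\mathbb{H}}\le C'\frac1n\sum_{j=1}^n|Y_{n,j}-X_j|$, and since $Y_{n,j}-X_j=\widehat\sigma_n^{-1}\big((1-\widehat\sigma_n)X_j-\widehat\mu_n\big)$, we get $\frac1n\sum_{j=1}^n|Y_{n,j}-X_j|\le\widehat\sigma_n^{-1}\big(|1-\widehat\sigma_n|\frac1n\sum_{j=1}^n|X_j|+|\widehat\mu_n|\big)$. The right-hand side tends to $0$ in probability by the weak law of large numbers (using $\E|X|<\infty$, which follows from $\E[X^2]<\infty$) together with $(\widehat\mu_n,\widehat\sigma_n)\cp(0,1)$.

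For the second step I would apply the law of large numbers in $\mathbb{H}$ to the iid $\mathbb{H}$-valued summands $\kappa(\cdot,X_j)$: the uniform bound $|\kappa(t,x)|\le C(1+|t|)$ yields the deterministic estimate $\|\kappa(\cdot,X_1)\|_{\mathbb{H}}\le C\big(\int_{-\infty}^\infty(1+|t|)^2\omega(t)\,\mathrm{d}t\big)^{1/2}<\infty$, so that $\E\|\kappa(\cdot,X_1)\|_{\mathbb{H}}<\infty$, the mean element $\kappa_\infty$ is well defined, and $\|\widetilde\kappa_n-\kappa_\infty\|_{\mathbb{H}}\cp 0$. Combining the two steps by the triangle inequality gives $\|\overline\kappa_n-\kappa_\infty\|_{\mathbb{H}}\cp 0$, hence $T_n/n\cp\Delta$. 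I expect the oracle-replacement step to be the main obstacle: it is where the dependence introduced by the shared estimators $\widehat\mu_n$ and $\widehat\sigma_n$ has to be absorbed, and where the growth in $t$ of the $x$-derivative of $\kappa$ must be matched against the decay of $\omega$, which is the only place a moment condition on $\omega$ beyond integrability is needed. The law-of-large-numbers step is routine once the uniform boundedness in $x$ has been recorded.
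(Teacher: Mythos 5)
Your argument is correct, and it is worth noting that the paper does not actually prove Theorem~\ref{thm:contalt}: it omits the proof, deferring to the proof of Theorem~3.1 in \cite{EEK:2021}. Your proof is a self-contained instantiation of exactly that standard route: write $T_n/n=\|\overline\kappa_n\|_{\mathbb H}^2$, remove the estimated parameters, and apply the Hilbert-space law of large numbers together with the continuous mapping theorem. The details check out. In particular, your identity $\partial_x\kappa(t,x)=-h(t,x)$ is consistent with how the paper builds $Z_n^*$ from a first-order Taylor expansion of $\kappa(t,(x-\mu)/\sigma)$ at $(\mu,\sigma)=(0,1)$ (one can verify it directly: both sides equal $(t^2-t\,g(x)+g'(x))\cos(tx)-(t^2+t\,g(x)+g'(x))\sin(tx)$ with $g(x)=\tanh(x/2)$), and the uniform-in-$x$ bounds $|\kappa(t,x)|\le C(1+|t|)$ and $|h(t,x)|\le C(1+t^2)$ hold because $|g|\le 1$ and $e^{-kx}(1+e^{-x})^{-2}\le 1$ for $k=0,1,2$. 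Your parity observation ($\omega$ symmetric, the real part of $\E[(it-g(X))e^{itX}]$ even in $t$ and the imaginary part odd) is exactly what identifies $\|\kappa_\infty\|_{\mathbb H}^2$ with $\Delta$. Two small remarks. First, you correctly read the limit as $\int|\E[(it-g(X))e^{itX}]|^2\omega(t)\,\mathrm dt$; the ``$iX$'' in the displayed statement is evidently a typo for ``$it$'', as the subsequent appeal to Theorem~\ref{thm:chr} confirms. Second, your mean-value-theorem step needs $\int(1+t^2)^2\omega(t)\,\mathrm dt<\infty$, which goes slightly beyond the paper's blanket assumption $\int\omega<\infty$; you flag this yourself, and it is harmless since the paper implicitly assumes $\omega$ makes all relevant integrals finite and the weight actually used is Gaussian. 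What your approach buys over the paper's is simply an explicit, checkable argument where the paper offers only a citation.
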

The proof of Theorem \ref{thm:contalt} follows the lines of the proof of Theorem 3.1 in \cite{EEK:2021} and since it does not provide further insights it is omitted. Notice that by the characterisation of the logistic law in Theorem \ref{thm:chr}, we have $\Delta=0$ if and only if $X\sim\mbox{L}(0,1)$. This implies that $T_n\cp\infty$, as $n\rightarrow\infty$, for any alternative with existing second moment. Thus we conclude that the test based on $T_n$ is consistent against each such alternative.

\section{Simulation results\label{sec:Sim results}}

In this section the finite sample performance of the newly proposed
test $T_{n,a}$ is compared to various existing tests for the logistic
distribution by means of a Monte Carlo study. We consider the traditional
tests (based on the empirical distribution function) of Kolmogorov-Smirnov
($\text{KS}_{n}$), Cram\'{e}r-von Mises ($\text{CM}_{n}$), Anderson-Darling
($\text{AD}_{n}$) and Watson ($\text{WA}_{n}$), a test proposed by \cite{alizadeh2017gini} based on
an estimate of the Gini index ($\text{G}_{n}$), as well as a test
by \cite{meintanis2004goodness}, based on the empirical characteristic function, with calculable
form
\[
\text{R}_{n,v}=\frac{4v^{2}\pi^{2}}{n}\sum_{j,k=1}^{n}\frac{\text{sinh}\left(Y_{j}+Y_{k}\right)}{\left(Y_{j}+Y_{k}\right)\left[4v^{2}\pi^{2}+(Y_{j}+Y_{k})^{2}\right]}-4\pi^{2}\sum_{j=1}^{n}S\left(v,Y_{j}\right)+n\left[\frac{2v\pi^{2}}{3}+2\sum_{k=1}^{v-1}\frac{v-k}{k^{2}}\right],
\]

where
\[
S\left(v,x\right)=\sum_{k=1}^{v}\frac{\left(2k-1\right)\left[\left\{ x^{2}+\left(2k-1\right)^{2}\pi^{2}\right\} \text{cosh}\left(x\right)-2x\text{sinh}\left(x\right)\right]}{\left[x^{2}+\left(2k-1\right)^{2}\pi^{2}\right]^{2}}.
\]

We also include a new test ($S_{n}$) constructed similarly to that
of $T_{n,a}$, but setting $f_t(x)=\text{exp}(tx),\ t\in(-1,1)$, in
Theorem 1.1. This new ``moment generating'' function based test is then given by the $L^2$-statistic
\[
S_{n}=\frac{n}{2}\int_{-1}^1 \left|\frac1n\sum_{j=1}^n\left(t-\frac{1-\exp(-Y_{n,j})}{1+\exp(-Y_{n,j})}\right)\exp(tY_{n,j})\right|^2\,\mbox{d}t.
\]
Direct calculations lead with $Y_{n,j,k}^+=Y_{n,j}+Y_{n,k}$ to the numerical stable version
\begin{eqnarray*}
S_n\hspace{-2mm}&=&\hspace{-2mm}\frac1n\sum_{j,k=1}^n \frac{(e^{Y_{n,j}}+e^{Y_{n,k}}+e^{Y_{n,j,k}^+}+1)^{-1}}{(Y_{n,j,k}^+)^{3}}\hspace{-1mm}\left(e^{2Y_{n,j,k}^+}-e^{-Y_{n,j,k}^+}+(1-Y_{n,j,k}^+)\left(e^{2Y_{n,j}+Y_{n,k}}+e^{Y_{n,j}+2Y_{n,k}}\right)\right.\\
&&\left.+e^{Y_{n,j,k}^+}(2(Y_{n,j,k}^+)^2-2Y_{n,j,k}^++1)-(1+Y_{n,j,k}^+)(e^{-Y_{n,j}}+e^{-Y_{n,k}})-2(Y_{n,j,k}^+)^2-2Y_{n,j,k}^+-1\right).
\end{eqnarray*}

A significance level of $\alpha = 0.05$ was used throughout the study and empirical
critical values were obtained from $100\ 000$ independent Monte Carlo
replications, with unknown parameters estimated by method of moments
(maximum likelihood estimation yielded similar results, therefore
we only display results based on method of moments). The critical values for $T_{n,a}$ are given in Table \ref{Critical values} for different values of $\alpha$, $n$ and $a$. The power estimates
were calculated for sample sizes $n=20$ and $n=50$ using $10\ 000$
independent Monte Carlo simulations. The alternative distributions
considered were the Normal ($\text{N}),$Student's t ($t$), Cauchy
($\text{C}$), Laplace ($\text{LP}),$log-normal ($\text{LN}),$gamma
($\Gamma$), uniform ($\text{U}$), beta ($\text{B}$) and chi-square
($\chi^{2}$) distributions. Tables \ref{Powers20} and \ref{Powers 50}
contain these power estimates. Apart from the mentioned alternative
distributions, we also considered some local alternatives. Table \ref{Local Cauchy}
contains the local power estimates for $n=20$ (top row) and $n=50$
(bottom row), where we simulated data from a mixture of the logistic
and Cauchy distribution, i.e. we sample from a logistic distribution
with probability $1-p$ and from a Cauchy distribution with probability
$p$. Similarly, Table \ref{local LN} contains the local power estimates
where we simulated data from a mixture of the logistic and log-normal
distribution. The tables contain the percentage of times that
the null hypothesis in (1) is rejected, rounded to the nearest integer. All calculations were performed in R \citep{rteam}.

\begin{table}
\begin{center}
\caption{Critical values for the new test $T_{n,a}$}

\begin{centering}
\medskip{}
\par\end{centering}

\begin{tabular}{|l|lll|lll|}

\hline
\multirow{2}{*}{} & \multicolumn{3}{c|}{$n=20$} & \multicolumn{3}{c|}{$n=50$}\tabularnewline
\cline{2-7} \cline{3-7} \cline{4-7} \cline{5-7} \cline{6-7} \cline{7-7}
 & $\text{T}_{n,3}$ & $\text{T}_{n,4}$ & $\text{T}_{n,5}$ & $\text{T}_{n,3}$ & $\text{T}_{n,4}$ & $\text{T}_{n,5}$\tabularnewline
\hline
$\alpha=0.01$ & 1.011 & 0.701 & 0.525 & 1.091 & 0.759 & 0.580\tabularnewline
$\alpha=0.05$ & 0.684 & 0.459 & 0.339 & 0.714 & 0.487 & 0.363\tabularnewline
$\alpha=0.10$ & 0.531 & 0.350 & 0.254 & 0.555 & 0.374 & 0.276\tabularnewline
\hline
\end{tabular}\label{Critical values}
\end{center}
\end{table}

\begin{table}
\caption{Estimated powers for alternative distributions for $n=20$}

\begin{centering}
\medskip{}
\par\end{centering}
\begin{tabular}{|l|cccc|ccc|ccccc|}
\hline
Alternative & $\text{T}_{n,3}$ & $\text{T}_{n,4}$ & $\text{T}_{n,5}$ & $\text{S}_{n}$ & $\text{R}_{n,1}$ & $\text{R}_{n,2}$ & $\text{R}_{n,3}$ & $\text{KS}_{n}$ & $\text{CM}_{n}$ & $\text{AD}_{n}$ & $\text{WA}_{n}$ & $\text{G}_{n}$\tabularnewline
\hline
$\text{L}(0,1)$ & 5 & 5 & 5 & 5 & 5 & 5 & 5 & 5 & 5 & 5 & 5 & 5\tabularnewline
$\text{N}(0,1)$ & 3 & 3 & 2 & 1 & 6 & \textbf{7} & \textbf{9} & 4 & 4 & 4 & 4 & 5\tabularnewline
$t_{2}$ & \textbf{37} & \textbf{37} & \textbf{37} & \textbf{38} & 31 & 27 & 23 & 33 & \textbf{37} & \textbf{37} & 36 & 33\tabularnewline
$t_{5}$ & \textbf{9} & \textbf{9} & \textbf{9} & \textbf{10} & \textbf{9} & 8 & 7 & 7 & 8 & 8 & 8 & 8\tabularnewline
$t_{10}$ & 4 & 4 & 4 & 4 & 5 & \textbf{6} & \textbf{6} & 5 & 4 & 5 & 5 & 5\tabularnewline
\textbf{$\text{C}(0,1)$} & 76 & 75 & 74 & 69 & 62 & 58 & 53 & 76 & \textbf{79} & \textbf{79} & \textbf{79} & 75\tabularnewline
$\text{LP}(0,1)$ & \textbf{13} & \textbf{13} & \textbf{13} & \textbf{13} & 8 & 6 & 5 & 12 & \textbf{13} & \textbf{13} & \textbf{13} & 10\tabularnewline
$\text{LN}(1)$ & \textbf{87} & \textbf{87} & \textbf{87} & 76 & 48 & 39 & 33 & 75 & 85 & \textbf{87} & 80 & 28\tabularnewline
$\text{LN}(1.5)$ & \textbf{98} & \textbf{98} & \textbf{98} & 94 & 70 & 61 & 53 & 94 & 97 & \textbf{98} & 97 & 52\tabularnewline
$\text{LN}(2)$ & 99 & 99 & 99 & 98 & 81 & 73 & 64 & 98 & \textbf{100} & \textbf{100} & 99 & 72\tabularnewline
$\Gamma(1)$ & \textbf{70} & \textbf{70} & 69 & 52 & 26 & 20 & 16 & 53 & 67 & \textbf{71} & 61 & 17\tabularnewline
$\Gamma(2)$ & \textbf{41} & \textbf{41} & 40 & 28 & 17 & 14 & 12 & 28 & 36 & 38 & 31 & 15\tabularnewline
$\Gamma(3)$ & \textbf{26} & \textbf{26} & \textbf{26} & 17 & 12 & 11 & 11 & 19 & 23 & 24 & 20 & 13\tabularnewline
$\text{U}(\sqrt{3},-\sqrt{3})$ & 16 & 8 & 5 & 0 & 48 & \textbf{55} & \textbf{58} & 13 & 21 & 28 & 27 & 30\tabularnewline
$\text{B}(2,2)$ & 5 & 3 & 2 & 0 & 19 & \textbf{24} & \textbf{28} & 6 & 9 & 11 & 12 & 14\tabularnewline
$\text{B}(3,5)$ & 13 & 11 & 10 & 4 & 11 & 13 & \textbf{14} & 11 & 13 & \textbf{14} & 13 & 13\tabularnewline
$\chi_{2}^{2}$ & \textbf{71} & \textbf{71} & 70 & 52 & 27 & 21 & 17 & 53 & 67 & \textbf{71} & 61 & 16\tabularnewline
$\chi_{5}^{2}$ & \textbf{32} & \textbf{32} & 31 & 21 & 15 & 13 & 12 & 23 & 27 & 29 & 24 & 14\tabularnewline
$\chi_{10}^{2}$ & \textbf{15} & \textbf{15} & \textbf{15} & 10 & 10 & 10 & 10 & 12 & 14 & \textbf{15} & 12 & 11\tabularnewline
$\chi_{15}^{2}$ & \textbf{12} & \textbf{11} & \textbf{11} & 7 & 8 & 9 & 10 & 9 & 10 & \textbf{11} & 10 & 9\tabularnewline
\hline
\end{tabular}\label{Powers20}
\end{table}

\begin{table}
\caption{Estimated powers for alternative distributions for $n=50$}

\begin{centering}
\medskip{}
\par\end{centering}
\begin{tabular}{|l|cccc|ccc|ccccc|}
\hline
Alternative & $\text{T}_{n,3}$ & $\text{T}_{n,4}$ & $\text{T}_{n,5}$ & $\text{S}_{n}$ & $R_{n,1}$ & $R_{n,2}$ & $R_{n,3}$ & $\text{KS}_{n}$ & $\text{CM}_{n}$ & $\text{AD}_{n}$ & $\text{WA}_{n}$ & $\text{G}_{n}$\tabularnewline
\hline
$\text{L}(0,1)$ & 5 & 5 & 5 & 5 & 5 & 5 & 5 & 5 & 5 & 5 & 5 & 5\tabularnewline
$\text{N}(0,1)$ & 5 & 3 & 3 & 0 & 4 & 7 & \textbf{10} & 5 & 6 & 6 & 7 & \textbf{9}\tabularnewline
$t_{2}$ & 65 & 64 & 64 & 59 & 57 & 54 & 51 & 60 & 66 & \textbf{67} & \textbf{67} & 65\tabularnewline
$t_{5}$ & 11 & 12 & 12 & \textbf{15} & \textbf{14} & 13 & 12 & 9 & 11 & 11 & 10 & 11\tabularnewline
$t_{10}$ & 5 & 4 & 4 & 4 & 5 & \textbf{6} & \textbf{6} & 5 & 5 & 5 & \textbf{6} & 5\tabularnewline
$\text{C}(0,1)$ & 98 & 98 & 97 & 91 & 91 & 90 & 88 & 98 & \textbf{99} & \textbf{99} & \textbf{99} & 98\tabularnewline
$\text{LP}(0,1)$ & 19 & 18 & 18 & 15 & 13 & 11 & 9 & 20 & \textbf{23} & 22 & \textbf{24} & 19\tabularnewline
$\text{LN}(1)$ & \textbf{100} & \textbf{100} & \textbf{100} & 96 & 83 & 76 & 70 & 99 & \textbf{100} & \textbf{100} & \textbf{100} & 48\tabularnewline
$\text{LN}(1.5)$ & \textbf{100} & \textbf{100} & \textbf{100} & \textbf{100} & 96 & 93 & 90 & \textbf{100} & \textbf{100} & \textbf{100} & \textbf{100} & 87\tabularnewline
$\text{LN}(2)$ & \textbf{100} & \textbf{100} & \textbf{100} & \textbf{100} & 99 & 98 & 96 & \textbf{100} & \textbf{100} & \textbf{100} & \textbf{100} & 98\tabularnewline
$\Gamma(1)$ & \textbf{99} & \textbf{99} & \textbf{99} & 73 & 48 & 39 & 31 & 94 & \textbf{99} & \textbf{99} & 97 & 21\tabularnewline
$\Gamma(2)$ & \textbf{87} & \textbf{87} & \textbf{87} & 39 & 27 & 22 & 19 & 67 & 81 & 86 & 73 & 28\tabularnewline
$\Gamma(3)$ & 68 & \textbf{69} & \textbf{69} & 24 & 18 & 17 & 14 & 48 & 59 & 65 & 50 & 30\tabularnewline
$\text{U}(\sqrt{3},-\sqrt{3})$ & 78 & 66 & 51 & 0 & 93 & \textbf{97} & \textbf{98} & 44 & 68 & 84 & 77 & 74\tabularnewline
$\text{B}(2,2)$ & 29 & 19 & 11 & 0 & 47 & \textbf{64} & \textbf{71} & 16 & 26 & 35 & 34 & 42\tabularnewline
$\text{B}(3,5)$ & \textbf{46} & 43 & 40 & 1 & 12 & 18 & 21 & 30 & 39 & \textbf{45} & 37 & 40\tabularnewline
$\chi_{2}^{2}$ & \textbf{99} & \textbf{99} & \textbf{99} & 75 & 49 & 40 & 33 & 95 & \textbf{99} & \textbf{99} & 97 & 20\tabularnewline
$\chi_{5}^{2}$ & 78 & \textbf{79} & \textbf{79} & 30 & 22 & 19 & 16 & 57 & 70 & 75 & 61 & 30\tabularnewline
$\chi_{10}^{2}$ & \textbf{46} & \textbf{46} & \textbf{46} & 13 & 12 & 13 & 13 & 31 & 38 & 41 & 32 & 29\tabularnewline
$\chi_{15}^{2}$ & \textbf{31} & \textbf{31} & 30 & 8 & 8 & 10 & 11 & 22 & 26 & 28 & 22 & 24\tabularnewline
\hline
\end{tabular}\label{Powers 50}
\end{table}

\begin{table}
\caption{Estimated local powers for mixture with Cauchy distribution for $n=20$
(top row) and $n=50$ (bottom row)}

\begin{centering}
\medskip{}
\par\end{centering}
\begin{tabular}{|c|cccc|cccc|cccc|}
\hline
$\text{Mixing proportion} (p)$ & $\text{T}_{n,3}$ & $\text{T}_{n,4}$ & $\text{T}_{n,5}$ & $\text{S}_{n}$ & $R_{n,1}$ & $R_{n,2}$ & $R_{n,3}$ & $\text{KS}_{n}$ & $\text{CM}_{n}$ & $\text{AD}_{n}$ & $\text{WA}_{n}$ & $\text{G}_{n}$\tabularnewline
\hline
\multirow{2}{*}{$0$} & 5 & 5 & 5 & 5 & 5 & 5 & 5 & 5 & 5 & 5 & 5 & 5\tabularnewline
 & 5 & 5 & 5 & 5 & 5 & 5 & 5 & 5 & 5 & 5 & 5 & 5\tabularnewline
\hline
\multirow{2}{*}{$0.05$} & \textbf{12} & \textbf{12} & \textbf{12} & \textbf{12} & \textbf{12} & 11 & 11 & 11 & \textbf{12} & \textbf{12} & 11 & 11\tabularnewline
 & 20 & 20 & 20 & \textbf{22} & \textbf{22} & \textbf{22} & 21 & 18 & 19 & 19 & 18 & 18\tabularnewline
\hline
\multirow{2}{*}{$0.1$} & 17 & \textbf{18} & \textbf{18} & \textbf{19} & 17 & 16 & 15 & 16 & 17 & 17 & 16 & 16\tabularnewline
 & 30 & 31 & 31 & \textbf{35} & \textbf{34} & 33 & 32 & 28 & 30 & 31 & 30 & 31\tabularnewline
\hline
\multirow{2}{*}{$0.15$} & \textbf{24} & \textbf{24} & \textbf{24} & \textbf{25} & 23 & 21 & 20 & 21 & 23 & 23 & 22 & 22\tabularnewline
 & 41 & 42 & 43 & \textbf{46} & \textbf{45} & 44 & 43 & 38 & 40 & 41 & 40 & 43\tabularnewline
\hline
\multirow{2}{*}{$0.2$} & 28 & \textbf{29} & \textbf{29} & \textbf{31} & 28 & 26 & 24 & 25 & 27 & 27 & 26 & 27\tabularnewline
 & 51 & 52 & 52 & \textbf{55} & \textbf{54} & 53 & 51 & 48 & 51 & 51 & 50 & 51\tabularnewline
\hline
\multirow{2}{*}{$0.3$} & \textbf{38} & \textbf{38} & \textbf{38} & \textbf{39} & 36 & 33 & 31 & 34 & 36 & 37 & 35 & 37\tabularnewline
 & 66 & 66 & 66 & \textbf{68} & \textbf{67} & 65 & 64 & 62 & 65 & 66 & 65 & 65\tabularnewline
\hline
\multirow{2}{*}{$0.4$} & 46 & \textbf{47} & \textbf{47} & \textbf{47} & 42 & 39 & 36 & 42 & 45 & 45 & 44 & 43\tabularnewline
 & \textbf{77} & \textbf{77} & \textbf{77} & 76 & 76 & 74 & 72 & 73 & 76 & \textbf{77} & 76 & 76\tabularnewline
\hline
\multirow{2}{*}{$0.5$} & \textbf{55} & \textbf{55} & \textbf{55} & 54 & 49 & 46 & 42 & 51 & 54 & 54 & 53 & 51\tabularnewline
 & 84 & 84 & 84 & 82 & 82 & 80 & 78 & 82 & 84 & \textbf{85} & \textbf{85} & 83\tabularnewline
\hline
\multirow{2}{*}{$0.6$} & \textbf{60} & \textbf{60} & \textbf{60} & 58 & 52 & 48 & 45 & 57 & \textbf{60} & \textbf{60} & 59 & 57\tabularnewline
 & 89 & 89 & 89 & 86 & 85 & 84 & 82 & 87 & \textbf{90} & \textbf{90} & \textbf{90} & 89\tabularnewline
\hline
\multirow{2}{*}{$0.7$} & 65 & 65 & 65 & 63 & 56 & 52 & 48 & 63 & \textbf{66} & \textbf{66} & 65 & 62\tabularnewline
 & \textbf{93} & 92 & 92 & 88 & 87 & 86 & 84 & 91 & \textbf{93} & \textbf{93} & \textbf{94} & 92\tabularnewline
\hline
\multirow{2}{*}{$0.8$} & 70 & 69 & 69 & 65 & 58 & 54 & 50 & 68 & \textbf{71} & \textbf{71} & \textbf{71} & 67\tabularnewline
 & \textbf{96} & 95 & 95 & 90 & 89 & 88 & 86 & 95 & \textbf{96} & \textbf{96} & \textbf{96} & 95\tabularnewline
\hline
\multirow{2}{*}{$0.9$} & 73 & 73 & 72 & 68 & 61 & 57 & 52 & 72 & \textbf{76} & \textbf{75} & \textbf{75} & 71\tabularnewline
 & 97 & 97 & 96 & 91 & 91 & 90 & 88 & 96 & \textbf{98} & \textbf{98} & \textbf{98} & 97\tabularnewline
\hline
\multirow{2}{*}{$1$} & 77 & 76 & 75 & 70 & 63 & 59 & 54 & 77 & \textbf{80} & \textbf{80} & \textbf{80} & 75\tabularnewline
 & 98 & 98 & 98 & 92 & 91 & 90 & 88 & 98 & \textbf{99} & \textbf{99} & \textbf{99} & 98\tabularnewline
\hline
\end{tabular}\label{Local Cauchy}
\end{table}

\begin{table}
\caption{Estimated local powers for mixture with Log-Normal distribution for
$n=20$ (top row) and $n=50$ (bottom row)}

\begin{centering}
\medskip{}
\par\end{centering}
\begin{tabular}{|c|cccc|ccc|ccccc|}
\hline
$\text{Mixing proportion} (p)$ & $\text{T}_{n,3}$ & $\text{T}_{n,4}$ & $\text{T}_{n,5}$ & $\text{S}_{n}$ & $R_{n,1}$ & $R_{n,2}$ & $R_{n,3}$ & $\text{KS}_{n}$ & $\text{CM}_{n}$ & $\text{AD}_{n}$ & $\text{WA}_{n}$ & $\text{G}_{n}$\tabularnewline
\hline
\multirow{2}{*}{$0$} & 5 & 5 & 5 & 5 & 5 & 5 & 5 & 5 & 5 & 5 & 5 & 5\tabularnewline
 & 5 & 5 & 5 & 5 & 5 & 5 & 5 & 5 & 5 & 5 & 5 & 5\tabularnewline
 \hline
\multirow{2}{*}{$0.05$} & \textbf{7} & \textbf{7} & \textbf{7} & \textbf{7} & \textbf{7} & 6 & 6 & 6 & \textbf{7} & \textbf{7} & \textbf{7} & 5\tabularnewline
 & 8 & 8 & 8 & \textbf{9} & \textbf{9} & 8 & 8 & 7 & 7 & 8 & 7 & 6\tabularnewline
 \hline
\multirow{2}{*}{$0.1$} & \textbf{8} & \textbf{8} & \textbf{8} & \textbf{9} & \textbf{8} & 7 & 6 & 7 & \textbf{8} & \textbf{8} & 7 & 6\tabularnewline
 & 10 & 11 & 11 & \textbf{13} & \textbf{12} & 11 & 11 & 10 & 11 & 11 & 11 & 8\tabularnewline
 \hline
\multirow{2}{*}{$0.15$} & \textbf{10} & \textbf{10} & \textbf{10} & \textbf{11} & 9 & 8 & 7 & 8 & 9 & 9 & 9 & 7\tabularnewline
 & 13 & 14 & 14 & \textbf{16} & \textbf{15} & 14 & 13 & 13 & 14 & 14 & 13 & 10\tabularnewline
 \hline
\multirow{2}{*}{$0.2$} & 11 & \textbf{12} & \textbf{12} & \textbf{12} & 10 & 9 & 8 & 11 & 11 & 11 & 11 & 8\tabularnewline
 & 17 & 17 & 17 & \textbf{20} & \textbf{19} & 18 & 16 & 17 & 18 & 18 & 18 & 13\tabularnewline
 \hline
\multirow{2}{*}{$0.3$} & \textbf{15} & \textbf{15} & \textbf{15} & \textbf{16} & 13 & 11 & 9 & \textbf{15} & \textbf{15} & \textbf{15} & \textbf{15} & 10\tabularnewline
 & 23 & 24 & 24 & 25 & 24 & 23 & 21 & 26 & \textbf{27} & \textbf{27} & \textbf{28} & 19\tabularnewline
 \hline
\multirow{2}{*}{$0.4$} & 19 & 19 & 19 & \textbf{20} & 14 & 12 & 10 & 19 & \textbf{20} & 19 & 19 & 13\tabularnewline
 & 33 & 33 & 33 & 32 & 31 & 28 & 26 & 38 & \textbf{41} & 39 & \textbf{42} & 27\tabularnewline
 \hline
\multirow{2}{*}{$0.5$} & 25 & 25 & 25 & 25 & 18 & 15 & 12 & 26 & \textbf{28} & 26 & \textbf{27} & 16\tabularnewline
 & 43 & 42 & 42 & 39 & 36 & 33 & 31 & 52 & \textbf{54} & 52 & \textbf{56} & 36\tabularnewline
 \hline
\multirow{2}{*}{$0.6$} & 31 & 31 & 31 & 29 & 21 & 17 & 14 & 32 & \textbf{35} & 34 & \textbf{35} & 20\tabularnewline
 & 57 & 55 & 53 & 45 & 42 & 39 & 36 & 65 & \textbf{70} & 67 & \textbf{72} & 45\tabularnewline
 \hline
\multirow{2}{*}{$0.7$} & 41 & 40 & 40 & 36 & 26 & 21 & 17 & 41 & \textbf{46} & 44 & \textbf{46} & 25\tabularnewline
 & 71 & 69 & 67 & 53 & 49 & 45 & 42 & 78 & \textbf{83} & 81 & \textbf{84} & 51\tabularnewline
 \hline
\multirow{2}{*}{$0.8$} & 51 & 50 & 49 & 44 & 30 & 25 & 20 & 51 & \textbf{57} & \textbf{56} & \textbf{56} & 28\tabularnewline
 & 85 & 83 & 82 & 63 & 58 & 54 & 50 & 88 & \textbf{92} & \textbf{92} & \textbf{93} & 54\tabularnewline
 \hline
\multirow{2}{*}{$0.9$} & 66 & 66 & 65 & 56 & 37 & 30 & 26 & 61 & \textbf{69} & \textbf{69} & 67 & 28\tabularnewline
 & 95 & 94 & 94 & 76 & 68 & 63 & 58 & 96 & \textbf{98} & \textbf{98} & \textbf{98} & 53\tabularnewline
 \hline
\multirow{2}{*}{$1$} & \textbf{87} & \textbf{87} & \textbf{87} & 77 & 48 & 39 & 33 & 75 & 85 & \textbf{87} & 80 & 29\tabularnewline
 & \textbf{100} & \textbf{100} & \textbf{100} & 96 & 83 & 76 & 70 & 99 & \textbf{100} & \textbf{100} & \textbf{100} & 48\tabularnewline
\hline
\end{tabular}\label{local LN}
\end{table}

The newly proposed tests are especially powerful when the alternative distributions
are heavy tailed, like the Student's t and Cauchy, or for skew alternatives such as the log-normal, gamma and chi-square distributions. The
test of Meintanis produces the highest estimated powers when the alternatives
have lighter tails or have bounded support such as the uniform and
Beta distributions. When comparing the traditional tests, it is clear
that the Anderson-Darling test has superior estimated powers.

For the mixture of the logistic and Cauchy distribution, the newly
proposed tests ($T_{n,a}$ and $S_{n}$) as well as the test of Meintanis
have the highest estimated powers for small values of the mixing parameter
$p$ (i.e. closer to the null distribution). The more traditional
tests have slightly higher estimated powers for increasing values
of the mixing parameter. This trend is similar when considering the
mixture of the logistic and log-normal distributions.

Overall, the newly proposed test $T_{n,a}$ performs favourably relative
to the existing tests and to a lessor extend the test $S_{n}$, which
is based on the moment generating function. For practical implementation of the test, we advise choosing the tuning parameter as $a=3$ as this choice produced high estimated powers for most alternatives considered. Alternatively, one can use the methods described in \cite{allison2015data} or \cite{tenreiro2019automatic} to choose this parameter data dependently.

\section{Practical application \label{sec:Practical}}

In this section all the tests considered in the Monte Carlo study are applied to the 'Bladder cancer' data set. This data set contains the monthly remission times of 128 patients, denoted by $x_1,\dots,x_{128}$,  who were diagnosed with bladder cancer and can be found in \cite{lee2003statistical}. The data was also studied and analysed by \cite{Hadi2021} and \cite{al2016log}. We are interested in testing whether the log of the remission times, $w_j=\text{log}({x_j}), j = 1,\dots,128$, follow a logistic distribution. The method of moment estimates of ${\mu}$ and  ${\sigma}$ are $\hat{\mu}_n= \hat{\mu}(w_{1},\dots,w_{128}) = 1.753$ and $\hat{\sigma}_n = \hat{\sigma}(w_{1},\dots,w_{128}) = 0.592$, respectively. Figure 1 represents the probability plot of $G^{-1}(\frac{k}{n+1})$ vs $y_{(k)}$, where $G^{-1}(\cdot)$ denotes the quantile function of the standard logistic distribution and $y_{(k)}=\frac{w_{(k)}-\hat{\mu}_n}{\hat{\sigma}_n}, k = 1,\dots,128$. This probability plot suggest that the underlying distribution of the data might be the logistic distribution. Table \ref{P-values} contains the test statistic values as well as the corresponding estimated $p$-values (calculated based on 10 000 samples of size 128 simulated from the standard logistic distribution) for the 8 tests for testing the goodness-of-fit for the logistic distribution. All the tests do not reject the null hypothesis that the log of the remission times is logistic distributed. These findings are in agreement with that of \cite{al2016log}, where they concluded that the remission times follows a log-logistic distribution.

\begin{figure}
\begin{center}
\includegraphics[scale=0.7]{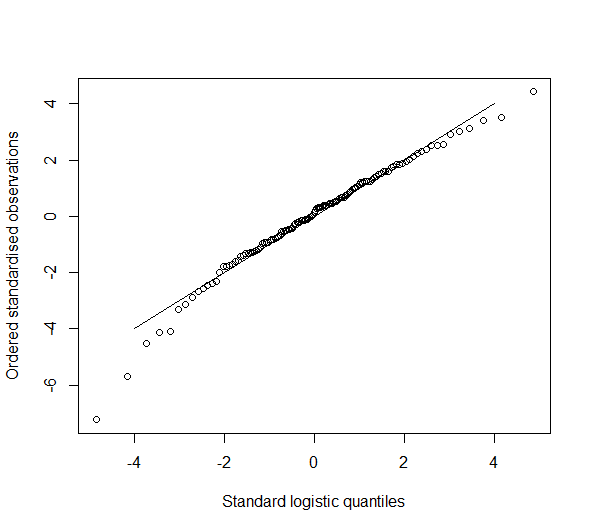}

\caption{Probability plot of the cancer data set}
\end{center}
\end{figure}\label{PP_plot}

\begin{table}
\begin{center}
\caption{Estimated $p$-values for the cancer data set}

\begin{centering}
\medskip{}
\par\end{centering}
\begin{tabular}{|c|cc|}
\hline
Test & Test statistic value & $p-$value\tabularnewline
\hline
$\text{T}_{n,3}$ & 0.500 & 0.171\tabularnewline
$\text{S}_{n}$ & 19.75 & 0.329\tabularnewline
$\text{R}_{n,1}$ & 169.4 & 0.602\tabularnewline
$\text{KS}_{n}$ & 0.061 & 0.404\tabularnewline
$\text{CM}_{n}$ & 0.072 & 0.401\tabularnewline
$\text{AD}_{n}$ & 0.440 & 0.421\tabularnewline
$\text{WA}_{n}$ & 0.043 & 0.680\tabularnewline
$\text{G}_{n}$ & 0.325 & 0.147\tabularnewline
\hline
\end{tabular}\label{P-values}
\end{center}
\end{table}

\section{Conclusion and open questions\label{sec:Conclusion}}
We have shown a new characterisation of the logistic law and proposed a weighted affine invariant $L^2$-type test. Monte Carlo results show that it is competitive to the state-of-the-art procedures. Asymptotic properties have been derived, including the limit null distribution and consistency against a large class of alternatives. We conclude the paper by pointing out open questions for further research.

Following the methodology in \cite{BEH:2017}, we have under alternatives satisfying a weak moment condition
$\sqrt{n}\left(T_n/n -\Delta\right)\cd \mbox{N}(0,\tau^2)$, as $n\rightarrow\infty$ where $\tau^2>0$ is a specified variance, for details, see Theorem 1 in \cite{BEH:2017}. Since the calculations are too involved to get further insights, we leave the derivation of formulas open for further research. Note that such results can lead to confidence intervals for $\Delta$ or approximations of the power function, for examples of such results see \cite{DEH:2021} and \cite{EHS:2020} in the multivariate normality setting.

Due to the increasing popularity of the log-logistic distribution in survival analysis, another avenue for future research is to adapt our test for scenarios where censoring is present. One possibility is to estimate the expected value in (3) by estimating the law of the survival times by the well-known Kaplan-Meier estimate. Some work on this has been done in the case of testing for exponentiality (see, e.g., \citealp{bojana2021} and \citealp{bothma2021}).

\bibliographystyle{apalike2}
\bibliography{references}

\begin{appendix}
\section{Asymptotic representation of estimators}\label{app:linrep}
In this section we derive explicit formulae for the linear representations of the estimators in \eqref{eq:psi_11} and \eqref{eq:psi_21}, for comparison we refer to \citet{meintanis2004goodness}, p.313.
\subsection{Maximum-likelihood estimators}
The maximum-likelihood estimators $\widehat{\mu}^{ML}_n$ and $\widehat{\sigma}^{ML}_n$ of the parameters $\mu$ and $\sigma$ in \eqref{eq:density} satisfy the equations, see displays (23.35) and (23.36) in \cite{JKB:1995},
\begin{eqnarray*}
\sum_{j=1}^n\Big[1+\exp\left(\frac{X_j-\widehat{\mu}^{ML}_n}{\widehat{\sigma}^{ML}_n}\right)\Big]&=&\frac{n}2,\\
\sum_{j=1}^n\frac{X_j-\widehat{\mu}^{ML}_n}{\widehat{\sigma}^{ML}_n}\Bigg[\frac{1-\exp\left(\frac{X_j-\widehat{\mu}^{ML}_n}{\widehat{\sigma}^{ML}_n}\right)}{1+\exp\left(\frac{X_j-\widehat{\mu}^{ML}_n}{\widehat{\sigma}^{ML}_n}\right)}\Bigg]&=&n.
\end{eqnarray*}
An implementation is found in the \texttt{R}-package \texttt{EnvStats}, see \cite{M:2013}. Direct calculations show that the score vector of $X\sim \mbox{L}(\mu,\sigma)$, $\mu\in\R$, $\sigma>0$, is
\begin{equation*}
U_{(\mu,\sigma)}(X)=\frac1{\sigma^2}\tanh\left(\frac{X-\mu}{2\sigma}\right)\left(\sigma,(X-\mu)\right)^\top+(0,-\sigma^{-1})^\top,
\end{equation*}
where $x^\top$ stands for the transpose of a vector $x$. The Fisher information matrix is
\begin{equation*}
I_{(\mu,\sigma)}=\sigma^{-2}\left(\begin{array}{cc} \frac13 & 0 \\ 0 & \frac{\pi^2+3}{9} \end{array}\right),
\end{equation*}
which is easily inverted due to the diagonal form. By \cite{BD:2015}, Section 6.2.1, we hence have for $\mu=0$ and $\sigma=1$ the asymptotic expansions
\begin{equation*}
\sqrt{n}\widehat{\mu}^{ML}_n=\frac3{\sqrt{n}}\sum_{j=1}^n\tanh(X_j/2)+o_\PP(1)\;\mbox{and}\;\sqrt{n}(\widehat{\sigma}^{ML}_n-1)=\frac9{(\pi^2+3)\sqrt{n}}\sum_{j=1}^n\left(X_j\tanh(X_j/2)-1\right)+o_{\PP}(1).
\end{equation*}

\subsection{Moment estimators}
Since for $X\sim \mbox{L}(\mu,\sigma)$, $\mu\in\R$, $\sigma>0$, we have $\E[X]=\mu$ and $\mathbb{V}[X]=\pi^2\sigma^2/3$ the moment estimators $\widehat{\mu}^{ME}_n$ and $\widehat{\sigma}^{ME}_n$ are given by $\widehat{\mu}^{ME}_n=\frac1n\sum_{j=1}^nX_j=\overline{X}_n$ and $\widehat{\sigma}^{ME}_n=\frac{\sqrt{3}}{\pi}S_n$, where $S_n^2=\frac1n\sum_{j=1}^n (X_j-\overline{X}_n)^2$. An implementation is found in the \texttt{R}-package \texttt{EnvStats}, see \cite{M:2013}. By the same arguments as in \cite{BE:20}, p. 113, we have
\begin{equation*}
    \sqrt{n}\widehat{\mu}^{ME}_n=\frac1{\sqrt{n}}\sum_{j=1}^nX_j\quad\mbox{and}\quad \sqrt{n}(\widehat{\sigma}^{ME}_n-1)=\frac1{\sqrt{n}}\sum_{j=1}^n\frac12\left(\frac3{\pi^2} X_j^2-1\right)+o_\PP(1).
\end{equation*}
Note that the unbiased moment estimators use $\tilde{S}_n^2=\frac1{n-1}\sum_{j=1}^n (X_j-\overline{X}_n)^2$ instead of $S_n^2$.

\end{appendix}

\end{document}